\documentclass[a4paper,12pt]{amsart}

 \usepackage{amssymb,amsthm}
 \setlength{\textwidth}{14.5cm}
\setlength{\oddsidemargin}{1cm}
\setlength{\evensidemargin}{1cm}
\setlength{\textheight}{20cm}
\setlength{\parskip}{2mm}
\setlength{\parindent}{0em}
\setlength{\headsep}{1.5cm}
\usepackage{amscd} 
\usepackage{graphicx,color} 

\usepackage{url} 

\newtheorem{theorem}{Theorem}[section]

\newtheorem{proposition}[theorem]{Proposition}

\theoremstyle{definition}

\def\r{\mathbb R}
\def\h{\mathbb H}
\def\v{\vec{v}}

\begin{document}

\title[A Note on Helicoidal Singular Minimal Surfaces]{A Note on Helicoidal Singular Minimal Surfaces}
\author{Rafael L\'opez}
\address{ Departamento de Geometr\'{\i}a y Topolog\'{\i}a\\ Universidad de Granada. 18071 Granada, Spain}
\email{rcamino@ugr.es}
\keywords{singular minimal surface, helicoidal surface, mean curvature }

\subjclass[2000]{53A10; 53C24; 53E10}

\begin{abstract} Let $\alpha\in\r$ and let $\vec{v}\in\r^3$ be a unit vector. A singular minimal surface $\Sigma$ in Euclidean space is a surface $\Sigma$ whose mean curvature $H$ satisfies $H=\alpha\frac{\langle N,\vec{v}\rangle}{\langle p,\vec{v}\rangle}$, where $N$ is the unit normal vector of $\Sigma$.  In this short note we study singular minimal surfaces which are invariant by a one-parameter  group of helicoidal motions. We prove that if $\Sigma$ is a helicoidal singular minimal surface, then the axis of the helicoidal motion is orthogonal to $\vec{v}$, $\alpha=-1$ and $\Sigma$ is a circular right cylinder.\end{abstract}

\maketitle

\section{Introduction  }

Let $\r^3$ be the Euclidean $3$-dimensional space and let   $\langle,\rangle$ be the Euclidean metric. Fix $\vec{v}\in\r^3$ be a unit vector and let $\alpha\in\r$. An orientable surface $\Sigma$ immersed in the halfspace $\r^3_+=\{p\in\r^3\colon \langle p,\vec{v}\rangle>0\}$ is called a $\alpha$-singular minimal surface if its  the mean curvature $H$  satisfies
\begin{equation}\label{eq1}
H(p)=\alpha\frac{\langle N(p),\v\rangle}{\langle p,\vec{v}\rangle},\quad p\in\Sigma,
\end{equation}
where   $N$ is the unit normal vector of $\Sigma$.  The convention for $H$ is to be the sum of the principal curvatures. When $\alpha=1$, singular minimal surfaces generalize in dimension   two, the notion of catenary because a singular minimal surface has the property   of having the lowest center of gravity among all surfaces with the same boundary curve and surface area \cite{bd,bht,dh}.  An interesting case is $\alpha=-2$. If we take $\vec{v}=(0,0,1)$, then $\Sigma$ is a singular minimal surface for $\alpha=-2$ if and only if $\Sigma$ is a minimal surface in hyperbolic space   $\h^3$ when this space is viewed in the    upper half-space model $\r_+^3$. The case $\alpha=0$ corresponds to classical minimal surfaces. From now on, we will discard the value $\alpha=0$.

In the search of examples of singular minimal surfaces, it is natural to assume some type of symmetry of the surface. If the surface is invariant by translations (cylindrical surface), then the vector $\vec{v}$ is parallel to the direction of the rulings of the surface. The classification was done by the author in   \cite{lo}. If the surface $\Sigma$ is a surface of revolution, then the vector $\vec{v}$ is parallel to the rotation axis or  the rotation axis    lies contained in the vector plane $\{p\in\r^3\colon\langle p,\vec{v}\rangle=0\}$ (\cite{lo}). The first type of  rotational singular minimal surfaces was studied in \cite{d1,d2,d3,dg,lo}.      

In Euclidean space $\r^3$, there are three types of one-parameter groups of rigid motions. Besides translations and rotations, the third one-parameter group of rigid motions is formed by helicoidal motions. A helicoidal motion is  a rotation about some line, called the   twist axis, followed by a translation parallel to that same line. Without loss of generality, we will assume that the twist axis is the $z$-axis.  
 For $h \in\r$, let $\{\mathcal{H}_t\colon t\in\r\}$ be   the one-parameter group of helicoidal motions  of $\mathbf R^3$ where $\mathcal{H}_t\colon\r^3\to\r^3$ is given by 
\begin{equation*}
\begin{split}
\mathcal{H}_t(x,y,z) &= \begin{pmatrix}\cos t&-\sin t &0\\ \sin t&\cos t&0\\ 0&0&1)\end{pmatrix}\begin{pmatrix} x\\ y\\ z\end{pmatrix}+\begin{pmatrix}0\\ 0\\ h t\end{pmatrix}\\
&=(x\cos t - y\sin t, x \sin t + y \cos t, z+ht), \quad t \in \r.
\end{split}
\end{equation*}

  A  helicoidal surface $\Sigma$ with axis the $z$-axis and pitch $h$  is a surface   invariant under $\mathcal{H}_t$ for all $t$, that is, $\mathcal{H}_t(\Sigma)=\Sigma$. If $h=0$, the surface is a  surface of revolution.  The limit case  $h\rightarrow \infty$ corresponds with   surfaces invariant under translations along the $z$-axis.  
 
 In this paper, we are interesting to classify the helicoidal singular minimal surfaces. This type of classification has been done for a variety of classes of surfaces under some  hypothesis on the curvature of the surface. Without to include all types of curvatures and ambient spaces, we mention some simple examples:  surfaces with constant mean curvature \cite{cd,pe,ri}, surfaces with constant ratio of principal curvatures (\cite{li}),  surfaces with constant anisotropic mean curvature (\cite{kp}) and  self-similar solutions of the mean curvature flow (\cite{ha}).

 In this paper, we   determine all helicoidal singular minimal surfaces. As in the case of cylindrical surfaces and rotational surfaces, this amounts to solving a second order ordinary differential equation. Although it may seem like a simple exercise, we point out that with other types of assumptions on the curvature of the surface, the corresponding ODE may turn out be difficult to manage. Taking   into account what happens for rotational singular minimal surfaces, in principle  there is no a relation between the vector $\vec{v}$ of the definition of singular minimal surface  in Eq. \eqref{eq1} and the  twist axis of the helicoidal surface. In fact, the  natural choice  that the twist axis is parallel to the vector $\vec{v}$ cannot occur unless that the surface is a surface of revolution ($h=0$). This will be proved in  Prop. \ref{pr1}. This is  because  in Eq. \eqref{eq1}, the left hand-side does not depend on  the parameter $t$ of the helicoidal motion $\mathcal{H}_t$ but  the right hand-side does. Notice that the parameter $t$ appears, in general, in the denominator $\langle p,\vec{v}\rangle$ in \eqref{eq1}. A similar situation occurs in the classification of the ruled singular minimal surfaces, where the only possible case is that the surface is cylindrical (\cite{ee}).

Definitively, we have to assume fully generality between the twist axis and the vector $\vec{v}$. In this note, the classification of the helicoidal helicoidal singular minimal surfaces is obtained in the following result.

 \begin{theorem} \label{t1}
Let $\Sigma$ be a   surface invariant by the helicoidal group $\{\mathcal{H}_t\colon t\in\r\}$ with pitch $h\not=0$.   If $\Sigma$ is a singular minimal surface with respect to $\vec{v}$, then  the twist axis is contained in the vector plane $\{p\in\r^3\colon\langle p,\vec{v}\rangle=0\}$ (in particular, $\vec{v}$ is orthogonal to the twist axis),   $\alpha=-1$  and $\Sigma$ is a circular cylinder about the $z$-axis.
\end{theorem}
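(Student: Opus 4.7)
The plan is to exploit the tension between the helicoidal invariance of $H$---constant along every orbit $\{\mathcal{H}_t(p):t\in\r\}$---and the explicit $t$-dependence that the right-hand side of \eqref{eq1} acquires when evaluated at $\mathcal{H}_t(p)$. Writing $\vec{v}=(v_1,v_2,v_3)$ and noting that $\mathcal{H}_t$ is the rotation $R_t$ about the $z$-axis followed by vertical translation by $ht$, one has
\[
\langle\mathcal{H}_t(p),\vec{v}\rangle=\langle p,R_t^{-1}\vec{v}\rangle+ht\,v_3,\qquad \langle N(\mathcal{H}_t(p)),\vec{v}\rangle=\langle N(p),R_t^{-1}\vec{v}\rangle.
\]
By analytic continuation, the identity $H(p)\langle\mathcal{H}_t(p),\vec{v}\rangle=\alpha\langle N(\mathcal{H}_t(p)),\vec{v}\rangle$ holds for every $t\in\r$. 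The right-hand side is a bounded trigonometric polynomial in $t$, while the left-hand side carries an extra linear term $H(p)\,h\,v_3\,t$. Matching the linear parts yields $H(p)\,h\,v_3=0$; assuming $v_3\neq 0$ would force $H(p)\equiv 0$ and $\langle N(p),R_t^{-1}\vec{v}\rangle\equiv 0$, which makes $N$ horizontal at every point and, combined with helicoidal invariance, pins $\Sigma$ down to a round cylinder about the $z$-axis---contradicting $H\equiv 0$. Hence $v_3=0$, proving the first assertion.

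Next, a rotation of coordinates about the $z$-axis brings $\vec{v}$ to $(1,0,0)$ and preserves the helicoidal group. The standard parametrization of $\Sigma$ by a profile curve $(f(u),0,g(u))$ in the $xz$-plane reads
\[
X(t,u)=\bigl(f(u)\cos t,\;f(u)\sin t,\;g(u)+ht\bigr),
\]
and a direct calculation gives $\langle X,\vec{v}\rangle=f\cos t$ and $\langle N,\vec{v}\rangle=(fg'\cos t-hf'\sin t)/W$, where $W=|X_t\times X_u|$ depends on $u$ only. Substituting into \eqref{eq1} and clearing denominators produces the identity
\[
H(u)\,W\,f\cos t=\alpha\bigl(fg'\cos t-hf'\sin t\bigr),
\]
valid for all $t$. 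Linear independence of $\cos t$ and $\sin t$ forces $HWf=\alpha fg'$ together with $\alpha h f'=0$; since $\alpha h\neq 0$, we conclude $f'\equiv 0$.

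Thus $f$ is a positive constant $r$, the profile is a vertical line at distance $r$ from the $z$-axis, and the sweeping produces the circular cylinder $\{x^2+y^2=r^2\}$, which is the third conclusion. On this cylinder, with $H$ the sum of principal curvatures, $H=-1/r$ for the outward normal, while $\langle N,\vec{v}\rangle/\langle p,\vec{v}\rangle=1/r$; plugging into \eqref{eq1} gives $\alpha=-1$, completing the theorem.

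The main obstacle is the first step: the reduction to $v_3=0$ is not merely a bound-versus-growth argument, because the linear-in-$t$ term could in principle be compensated by $H\equiv 0$. Ruling this degenerate alternative out requires identifying $\Sigma$ geometrically (as a round cylinder, from $N$ being horizontal together with helicoidal invariance) and then using that such a cylinder has nonzero mean curvature. Once $v_3=0$ is secured, the remaining work is a clean coefficient-matching in the $t$-Fourier expansion, so no genuine ODE in $(f,g)$ need be integrated---the rigidity of the equation forces $f$ to be constant at the algebraic level.
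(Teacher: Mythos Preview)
Your proof is correct and follows the same overall strategy as the paper: exploit that $H$ is constant along helicoidal orbits while the right side of \eqref{eq1} depends explicitly on $t$, and match the $t$-dependence. The paper multiplies out \eqref{eq1} along the full parametrization, obtains an identity of the form $A_0(s)+A_1(s)t+A_2(s)\sin t+A_3(s)\cos t=0$, and reads off all constraints directly from the four coefficients (using linear combinations $hA_0-zA_1$ and $v_2A_3-v_1A_2$).

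Your organization differs in two small ways. First, you isolate the conclusion $v_3=0$ by a more geometric argument: the linear-in-$t$ term forces $Hhv_3=0$, and if $v_3\ne 0$ then $H\equiv 0$ together with $\langle N,R_t^{-1}\vec v\rangle\equiv 0$ makes $N$ horizontal, which (with helicoidal invariance) forces a round cylinder and contradicts $H\equiv 0$. The paper reaches $v_3=0$ purely computationally. Second, after rotating so that $\vec v=(1,0,0)$, you match only the $\cos t$ and $\sin t$ coefficients and never need the explicit formula \eqref{mean2} for $H$; the single equation $\alpha h f'=0$ already gives $f'\equiv 0$. This is a genuine economy over the paper's calculation, though the underlying mechanism---linear independence of $\{1,t,\cos t,\sin t\}$---is identical. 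Your terse justification that ``$N$ horizontal plus helicoidal invariance'' yields a round cylinder is correct (horizontal $N$ forces a vertical cylinder $C\times\r$, and $\mathcal H_t$-invariance forces $R_t(C)=C$), but a reader might appreciate one more sentence there.
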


  \section{Proof of Theorem \ref{t1}}

  A helicoidal surface $\Sigma$ invariant by the helicoidal group $\{\mathcal{H}_t\colon t\in\r\}$     can be obtained by moving a regular planar curve parametrized by arc-length
$$\gamma\colon I\subset\r\to\r^3,\quad \gamma(s)=(x(s),0,z(s)),$$ 
under the helicoidal motions $\mathcal{H}_t$. A parametrization of $\Sigma$ is $\Psi\colon I\times\r\to\r^3$ where
\begin{equation}\label{para}
\Psi(s,t)=\mathcal{H}_t(\gamma(s))=(x(s)\cos t,x(s)\sin t,z(s)+ht).
\end{equation}
We compute all terms of Eq. \eqref{eq1}. From now on, we will drop the variable of differentiation if it is understood in the context. The unit normal vector $N$ of $\Sigma$ is 
\begin{equation}\label{normal}
N=\frac{1}{\sqrt{x^2+h^2x'^2}}\left(hx'\sin t-xz'\cos t,-hx'\cos t-xz'\sin t,xx'\right).
\end{equation}
We now compute the mean curvature $H$ of $\Sigma$.  The coefficients of the first fundamental form of $\Sigma$ are
\begin{align*}
    E &= \langle \Psi_s, \Psi_s \rangle = 1, \\
    F &= \langle \Psi_s, \Psi_\theta \rangle = hz', \\
    G &= \langle \Psi_\theta, \Psi_\theta \rangle =  x^2 + h^2
    \end{align*}
and its coefficients of the second fundamental form are
\begin{align*}
    e &= \langle \Psi_{ss} , N \rangle = \frac{x\left( z''x' - z'x''\right)}{\sqrt{h^2x'^2 + x^2}}, \\
    f &= \langle \Psi_{s\theta} , N \rangle = -\frac{hx'^2}{\sqrt{h^2x'^2 + x^2}},\\
    g &= \langle \Psi_{\theta\theta} , N \rangle = \frac{x^2z'}{\sqrt{h^2x'^2 + x^2}}.
\end{align*}
Regularity of the surface is equivalent to $EG-F^2>0$, that is, 
$$x^2+h^2-h^2z'^2=x^2+h^2 x'^2>0$$
because $x'^2+z'^2=1$. Therefore, the mean curvature of a helicoidal surface $\Sigma$ is given by
\begin{equation}\label{mean}
 H = \frac{eG - 2fF + gE}{EG-F^2} = \frac{(x'z''-z'x'')(x^3+h^2x) + z'(2h^2x'^2+x^2)}{(h^2x'^2+x^2)^{\frac{3}{2}}}.
\end{equation}
In order to simplify all expressions, we use that $\gamma$ is parametrized by arc-length. Since $x'^2+z'^2=1$, there is a smooth function $\theta=\theta(s)$ such that
\begin{equation}\label{arc}
\begin{split}
x'(s)&=\cos\theta(s),\\
z'(s)&=\sin\theta(s).
\end{split}
\end{equation}
Moreover, the curvature of the curve $\gamma$ coincides with $\theta'(s)$. We now have from \eqref{mean} and \eqref{arc} that the mean curvature is
\begin{equation}\label{mean2}
H=  \frac{x (x^2+h^2) \theta '+\sin\theta  \left(x^2+2 h^2 \cos ^2\theta \right)}{ (x^2+h^2 \cos ^2\theta )^{3/2}}.
\end{equation}
Similarly, the unit normal vector $N$ on \eqref{normal} is now
\begin{equation}\label{normal2}
N=\frac{1}{\sqrt{x^2+h^2\cos ^2\theta}}\left(h\cos\theta\sin t-x\sin\theta\cos t,-h\cos\theta\cos t-x\sin\theta\sin t,x\cos\theta\right).
\end{equation}

Notice that 
\begin{equation}\label{reg}
EG-F^2=x^2+h^2 \cos ^2\theta>0.
\end{equation}

We now prove that the natural choice when the twist axis coincides with the vector $\vec{v}$ is not possible unless that the helicoidal motions are pure rotation motions ($h=0$).

\begin{proposition} \label{pr1}
Let $\Sigma$ be a   surface invariant by the helicoidal group $\{\mathcal{H}_t\colon t\in\r\}$ with pitch $h$.  If $\Sigma$ is a singular minimal surface with $\vec{v}=(0,0,1)$, then $h=0$.
\end{proposition}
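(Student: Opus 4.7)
The plan is to exploit the fact that with $\vec{v}=(0,0,1)$ and $h\neq 0$, the right-hand side of \eqref{eq1} acquires a $t$-dependence through the denominator $\langle p,\vec{v}\rangle$ that the left-hand side, being a function of $s$ alone, cannot accommodate.

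First I would evaluate both sides of \eqref{eq1} along the parametrization \eqref{para}. The mean curvature $H$ given by \eqref{mean2} depends only on $s$, and the third component of \eqref{normal2} yields
$$\langle N,\vec{v}\rangle = \frac{x\cos\theta}{\sqrt{x^2+h^2\cos^2\theta}},$$
again a function of $s$ alone, whereas $\langle p,\vec{v}\rangle = z(s)+ht$ is genuinely affine in $t$ whenever $h\neq 0$. Clearing denominators turns \eqref{eq1} into
$$H(s)\bigl(z(s)+ht\bigr) = \alpha\,\frac{x(s)\cos\theta(s)}{\sqrt{x(s)^2+h^2\cos^2\theta(s)}}.$$
Differentiating in $t$ annihilates the right-hand side, so $h\,H(s)\equiv 0$. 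Assuming $h\neq 0$ for contradiction, this forces $H\equiv 0$.

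Next I would feed $H\equiv 0$ back into \eqref{eq1}. Since $\alpha\neq 0$ and $\langle p,\vec{v}\rangle>0$ in the half-space, this gives $\langle N,\vec{v}\rangle\equiv 0$, that is, $x\cos\theta\equiv 0$. Discarding the degenerate possibility $x\equiv 0$ (in which the orbit of $\gamma$ collapses to the $z$-axis and $\Sigma$ fails to be an immersed surface), continuity forces $\cos\theta\equiv 0$ on the open set where $x>0$; hence $\theta\equiv \pm\pi/2$ and $\theta'\equiv 0$. Substituting $\cos\theta=0$, $\sin\theta=\pm 1$, $\theta'=0$ into the formula \eqref{mean2} produces $H=\pm 1/x\neq 0$, contradicting $H\equiv 0$. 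Therefore $h=0$.

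The only real obstacle is the routine check excluding the degenerate subcase $x\equiv 0$; the substance of the argument is the separation-of-variables observation that a function of $s$ alone cannot equal $(z(s)+ht)^{-1}$ times another function of $s$ unless the coefficient of $t$ vanishes.
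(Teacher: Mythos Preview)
Your argument is correct and follows essentially the same strategy as the paper: both clear denominators in \eqref{eq1}, observe that the resulting identity $H(s)\bigl(z(s)+ht\bigr)=\alpha\,x\cos\theta/\sqrt{x^2+h^2\cos^2\theta}$ must hold for all $t$, and conclude (by differentiating in $t$, equivalently by setting the coefficient $A_1$ of $t$ to zero) that $hH\equiv 0$ and hence $x\cos\theta\equiv 0$. The only minor difference is in disposing of the case $\cos\theta\equiv 0$: the paper recomputes the constant term and finds $A_0=x_0(\pm s+z_0)\not\equiv 0$, whereas you note more directly that $\cos\theta=0$ gives $H=\pm 1/|x|\neq 0$, contradicting $H\equiv 0$.
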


\begin{proof} From \eqref{para} and \eqref{normal2} we have 
\begin{equation*}
\begin{split}
\langle N,\vec{v}\rangle&=\frac{x\cos\theta}{\sqrt{x^2+h^2\cos^2\theta}},\\
\langle p,\vec{v}\rangle&=\langle \Psi(s,t),\vec{v}\rangle=htz.
\end{split}
\end{equation*}
By using the expression of the mean curvature $H$ in \eqref{mean2}, equation \eqref{eq1} writes as a polynomial on $t$ of degree $1$, namely, 
$$A_0(s)+A_1(s) t=0.$$
Then the coefficients $A_0$ and $A_1$ must be zero. A computation of these coefficients gives
\begin{equation*}
\begin{split}
A_0&=z \left(x (x^2+h^2) \theta '+\sin\theta (x^2+2 h^2 \cos ^2\theta\right))- \alpha  x \cos \theta \left(x^2+h^2 \cos ^2\theta\right),\\
A_1&=h \left(x (x^2+h^2) \theta '+\sin\theta (x^2+2 h^2 \cos ^2\theta )\right).
\end{split}
\end{equation*}
The linear combination $h A_0-z A_1=0$ becomes
$$\alpha h x\cos\theta(x^2+h^2\cos^2\theta)=0.$$
Since $x^2+h^2\cos^2\theta\not=0$ by \eqref{reg}, we deduce $h=0$, or $x=0$ identically or $\cos\theta=0$ identically. If $h=0$, then the result is proved. Assume now that $h\not=0$ and we see that the cases $x=0$ or $\cos\theta=0$ identically cannot occur. 

If $x=0$ identically, then $x'=\cos\theta=0$ hence $x^2+h^2\cos^2\theta=0$, which it is not possible. If $x'=\cos\theta=0$ identically, then $x=x(s)$ is a constant function, $x(s)=x_0$ with the extra condition that $x_0\not=0$ by  regularity \eqref{reg}. In particular, from \eqref{arc} we deduce $z(s)=\pm s+z_0$. Computing again  the coefficients  $A_0$ and $A_1$, we simply have 
$A_0(s)=x_0(\pm s+z_0)$ for some $z_0\in\r$ and $A_1(s)=hx_0$. Then $A_0=0$ gives a contradiction. This completes the proof of Prop. \ref{pr1}.
\end{proof}

We prove Thm. \ref{t1} assuming that $\vec{v}$ is an arbitrary unit vector of $\r^3$. Let write the vector $\vec{v}$ in Cartesian coordinates, 
$$\vec{v}=(v_1,v_2,v_3).$$
  From \eqref{para} and \eqref{normal2}, we have
\begin{equation*}
\begin{split}
\langle N,\vec{v}\rangle&=\frac{\cos\theta (v_3 x+h v_1 \sin t-h v_2 \cos t)-x \sin\theta (v_1 \cos t+v_2 \sin t)}{\sqrt{x^2+h^2 \cos ^2 \theta  )}},\\
\langle p,\vec{v}\rangle&=\langle \Psi(s,t),\vec{v}\rangle=(h t+z)v_3+x(v_1\cos t+v_2\sin t).
\end{split}
\end{equation*}
In contrast to the proof of Prop. \ref{pr1}, equation  \eqref{eq1} now is an equation of type
$$A_0(s)+A_1(s)t+A_2(s)\sin t+A_3(s)\cos t=0.$$
Since the functions $\{1,t,\sin t,\cos t\}$ are linearly independent, the coefficients $A_i(s)$ must be zero. The computation of these coefficients yields
\begin{equation*}
\begin{split}
A_0&=-v_3 \left(\alpha x\cos\theta(x^2+ h^2  \cos ^2\theta)-xz\theta'(h^2+x^2)-x^2 z \sin \theta-2 h^2 z \sin \theta \cos ^2\theta\right), \\
A_1&=h v_3 \left(  x \theta '(h^2+x^2)+x^2 \sin \theta+2 h^2 \sin \theta \cos ^2\theta\right), \\
A_2&= \left( \alpha x\sin\theta(x^2+ h^2  \cos ^2\theta)+x^2\theta'(x^2+h^2) +2 h^2   x \sin \theta \cos ^2\theta + x^3 \sin \theta\right)v_2\\
& -\alpha h\cos\theta(   x^2  +   h^2   \cos ^2\theta)v_1,\\
A_3&= \left( \alpha x\sin\theta(x^2+ h^2  \cos ^2\theta)+x^2\theta'(x^2+h^2)+2 h^2  x \sin \theta \cos ^2\theta  +  x^3 \sin \theta\right)v_1\\
&+ \alpha  h \cos\theta(x^2 +   h^2  \cos ^2\theta)v_2.\\
\end{split}
\end{equation*}
The linear combination $hA_0-z A_1=0$ writes simply as
$$ \alpha  h v_3x \cos\theta   (x^2+h^2 \cos ^2 \theta  )=0.$$ 
By regularity \eqref{reg}  and because $h\not=0$, we deduce 
$$v_3x \cos\theta=0.$$
We distinguish three cases.
\begin{enumerate}
\item Case  $x=0$ identically. Then $\cos\theta=0$ and regularity \eqref{reg} is lost. This case cannot occur. 
\item Case $\cos\theta=0$ identically. Then $x(s)=x_0\not=0$ is a constant function, being $z(s)=\pm s+z_0$ for some $z_0\in\r$. Then  $\gamma$ is a vertical line. In this case, 
$$\Psi(s,t)=(x_0 \cos t,x_0\sin t,ht\pm s+z_0).$$
This surface is a circular cylinder of radius $|x_0|$.  Now we have
\begin{equation*}
\begin{split}
A_0&=x_0^2z v_3, \\
A_1&=x_0^2h v_3,\\
A_2&=x_0^3v_2(1+\alpha),\\
A_3&= x_0^3v_1(1+\alpha)\\
\end{split}
\end{equation*}
Then $A_1=0$ implies $v_3=0$. This means that the $z$-axis, which it is the twist axis of the helicoidal surface, lies contained in the vector plane $\{p\in\r^3\colon\langle p,\vec{v}\rangle=0\}$. Since $v_1$ and $v_2$ cannot be simultaneously $0$, from $A_2=0$ or $A_3=0$ we deduce  $\alpha=-1$. 
\item Case $v_3=0$. The linear combination $v_2 A_3-v_1A_2=0$ becomes
$$h\alpha\cos\theta(x^2+h^2\cos^2\theta)=0.$$
Since $h\not=0$, this equation  implies $\cos\theta=0$ identically. This case has been   considered in the previous item. This completes the proof of Thm. \ref{t1}.
\end{enumerate}

\section*{Acknowledgements}
The author has been partially supported by Grant PID2023-150727NB-I00 funded by MICIU/AEI/10.13039/501100011033, and ERDF/EU and 
Grant PID2023-150727NB-I00 and Maria de Maeztu Unit of Excellence IMAG, reference CEX2020-
001105-M, funded by MICIU/AEI/10.13039/501100011033, and ERDF/EU. 

 \end{document}